\newtheorem{theorem}{Theorem}[section]
\newtheorem{lemma}[theorem]{Lemma}
\theoremstyle{definition}
\newtheorem{definition}[theorem]{Definition}
\newtheorem{example}[theorem]{Example}
\newtheorem{remark}[theorem]{Remark}
\renewcommand{\AA}{\mathbb{A} }
\newcommand{\FF}{\mathbb{F} }
\newcommand{\PP}{\mathbb{P} }
\newcommand{\ZZ}{\mathbb{Z} }
\newcommand{\cF}{\mathcal{F} }
\newcommand{\cG}{\mathcal{G} }
\newcommand{\cO}{\mathcal{O} }
\def\GL{\mathrm{GL}}
\def\SL{\mathrm{SL}}
\def\PGL{\mathrm{PGL}}
\title{Minimal algebraic space filling curves on the product of projective lines}
\author{Menhaz Ahammed}
\address{Department of Mathematics, Fordham University, New York, NY 10023}
\email{mahammed@fordham.edu}
\author{Matthew Campbell}
\address{Department of Mathematics, University of Maryland, College Park, MD 20742}
\email{mcam12@umd.edu}
\author{Han-Bom Moon}
\address{Department of Mathematics, Fordham University, New York, NY 10023}
\email{hmoon8@fordham.edu}
\date{\today}
\begin{document}

\maketitle

\begin{abstract}
We investigate minimal degree smooth algebraic space filling curves on the product of projective lines. We prove that there are plenty of examples in an explicit sense, extending the existence result of Homma and Kim. 
\end{abstract}

\section{Introduction}\label{sec:intro}

Many geometric notions, such as smoothness and connectedness, can be extended to algebraic geometry over arbitrary base fields. Often, over a finite field, we obtain many counterintuitive results. In this paper, we explore one example of such results on extremal subvarieties. 

The original \textbf{space filling curve}, an example of a continuous surjective map $f : [0, 1] \to [0, 1]^2$ obtained by Peano \cite{Pea90}, has been very influential in the development of analysis and topology. Later, it was generalized to the Hahn-Mazurkiewicz theorem -- for any connected compact manifold $M$, there is a surjective continuous map $f : [0, 1] \to M$. However, we may not expect good geometric properties for a space filling curve. For example, it is not differentiable and not injective. 

One may ask a similar question to a variety over a finite field. Let $p$ be a prime and $q$ be a power of $p$. Let $\FF_q$ be the finite field of order $q$. For any smooth projective variety $X$ over $\FF_q$, let $X(\FF_q)$ be the set of $\FF_q$-rational points. The main question is the following: Can we find a morphism $f : C \to X$ from an irreducible curve $C$ such that $f(C)(\FF_q) = X(\FF_q)$? In other words, is there an algebraic space filling curve? The answer is, surprisingly, that one can find such a curve $C$ that is smooth, and $f$ is indeed an embedding. One may even find such a subcurve $C$ that is a complete intersection in $X$. We call such a curve a \textbf{smooth algebraic space filling curve}.

In \cite{Kat99}, Katz constructed an algebraic smooth space filling curve of $\AA^n$. His construction did not extend to projective spaces. Thus, he asked if there is a smooth space filling curve for a smooth projective variety. This question was answered affirmatively by Gabber and Poonen \cite{Gab01, Poo04}, independently.

Although we have a satisfactory theoretical answer for Katz's question, the proof is not constructive, so we do not have many explicit examples. For $\PP^2$ over $\FF_q$, the minimal degree of a smooth space filling curve is $q+2$, and these curves were classified by Homma and Kim \cite{HK13}. Higher degree space filling curves of $\PP^2$ were constructed in \cite{AG23}. Over $\PP^3$, some concrete examples over small finite fields were constructed in \cite{CD+23} using computer-aided calculation. 

In \cite{HK23}, Homma and Kim investigated smooth space filling curves in $(\PP^1 \times \PP^1)$ over $\FF_q$ with minimal degree. They showed that the lower bound of the bidegree of such a curve is $(q+1, q+1)$, and in this case, the defining equation of the curve is of the form 
\begin{equation}\label{eqn:mspcgenera}
    f(Y_0, Y_1)(X_0^q X_1 - X_0 X_1^q) + 
    g(X_0, X_1)(Y_0^q Y_1 - Y_0 Y_1^q)
\end{equation}
for two homogeneous polynomials $f \in \FF_q[Y_0,Y_1]_{q+1}$ and $g \in \FF_q[X_0, X_1]_{q+1}$. We will denote this curve by $C_{f,g}$. They also explicitly constructed examples of $f, g$ providing a smooth space filling curve over $q \ne 2$. 

The main result of this paper strengthens their existence statement. We show that there are \emph{many} smooth space filling curves of this form, in the following explicit sense. 

\begin{theorem}\label{thm:mainthm}
Let $q$ be an odd prime power. Let $f \in \FF_q[Y_0, Y_1]_{q+1}$ be a homogeneous polynomial such that $V(f) \subset \PP^1$ has no $\FF_{q}$-rational point. Then there is $g \in \FF_q[X_0, X_1]_{q+1}$ such that $C_{f,g}$ is a smooth space filling curve.
\end{theorem}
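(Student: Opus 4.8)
\emph{Step 1 (the space-filling property will be free).} Throughout I will write $L(X)=X_0^qX_1-X_0X_1^q$ and $L(Y)=Y_0^qY_1-Y_0Y_1^q$, so that $C_{f,g}=V(F)$ with $F=f(Y)L(X)+g(X)L(Y)$. The first observation is that $L(X)$ vanishes exactly on $\PP^1(\FF_q)$, and likewise $L(Y)$; hence at any $(P,Q)\in(\PP^1\times\PP^1)(\FF_q)$ both summands of $F$ vanish, so $(P,Q)\in C_{f,g}$. Thus \emph{every} curve $C_{f,g}$ passes through all $\FF_q$-points, and once I know $C_{f,g}$ is a reduced irreducible curve it is automatically space filling. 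The whole content of the theorem is therefore \emph{smoothness}; I also plan to note that a smooth member of $|\mathcal{O}(q+1,q+1)|$ is connected (from $H^1(\mathcal{O}(-q-1,-q-1))=0$) and hence irreducible.

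\emph{Step 2 (Jacobian criterion and stratification).} Using the characteristic-$p$ identities $\partial L/\partial X_0=-X_1^q$ and $\partial L/\partial X_1=X_0^q$ (and the analogues in $Y$), the partials of $F$ are immediate, and I would stratify a hypothetical singular point $(P,Q)$ according to the rationality of its coordinates. At a fully rational point the $X$-gradient equals $f(Q)\cdot(-P_1,P_0)$ and the $Y$-gradient equals $g(P)\cdot(-Q_1,Q_0)$, so such a point is singular iff $f(Q)=g(P)=0$; the hypothesis on $f$ forces $f(Q)\neq 0$, so rational points are always smooth. The plan is then to \emph{impose that $g$ has no $\FF_q$-rational root}: combined with the hypothesis on $f$, this makes the mixed strata (exactly one rational coordinate) empty on $C_{f,g}$ and excludes vertical/horizontal line components, leaving only the stratum where both coordinates are non-rational.

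\emph{Step 3 (reduction to disjoint critical values).} The structural heart of the argument is that $C_{f,g}$ is the fiber product $\PP^1\times_{\PP^1}\PP^1$ of the degree-$(q+1)$ separable morphisms $\gamma=[L:g]$ and $\delta=[L:-f]$, since $[L(P):g(P)]=[L(Q):-f(Q)]$ is precisely $F(P,Q)=0$. A tangent-space computation then shows $C_{f,g}$ is smooth at $(P,Q)$ iff $d\gamma_P\neq 0$ or $d\delta_Q\neq 0$, a criterion insensitive to wild ramification. Both maps are unramified over $[0:1]$ (the fiber there is $\PP^1(\FF_q)$), which recovers Step 2, so the remaining singular points correspond exactly to a common critical value of $\gamma$ and $\delta$ away from $[0:1]$. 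In the affine coordinate $x=X_0/X_1$ the critical points of $\gamma$ are the zeros of the Wronskian $\tilde g+(x^q-x)\tilde g'$, with critical value $[1:-\tilde g'(x^*)]$, and similarly $\delta$ has critical values $[1:\tilde f'(y^*)]$. Hence smoothness becomes: the fixed finite set $N:=\{\tilde f'(y^*)\}$ (determined by $f$, with $|N|\le 2q$) must be disjoint from the critical slopes $\{-\tilde g'(x^*)\}$ of $\gamma$.

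\emph{Step 4 (construction of $g$; the main obstacle).} The crux is to produce, over $\FF_q$, a root-free $g$ whose critical values avoid $N$. I would seed the construction with $g_0=X_0^{q+1}+aX_1^{q+1}$ where $-a$ is a non-square in $\FF_q$ — and this is exactly where $q$ odd is used, since then the $(q+1)$-st powers are the squares, so $g_0$ has no rational root, while a direct computation gives Wronskian $x^{2q}+a$ and the \emph{two} critical values $\pm\beta$ with $\beta^2=-a$, forming a single size-two Frobenius orbit with $\beta\notin\FF_q$. I would then sweep the two-parameter family $g=sg_0+tL$ with $s\in\FF_q^{\times},\ t\in\FF_q$: each such $g$ still has no rational root, the critical points are unchanged, and every critical value transforms by $v\mapsto sv+t$, so the critical values become the single orbit $\{s\beta+t,\,-s\beta+t\}$ and disjointness from $N$ collapses to the one scalar condition $s\beta+t\notin N$. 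Since $(s,t)\mapsto s\beta+t$ is injective on $\FF_q\times\FF_q$ (as $\beta\notin\FF_q$), at most $|N\cap\FF_{q^2}|\le 2q$ pairs are bad out of $(q-1)q$ available, giving a valid $(s,t)$ for all odd $q\ge 5$, with $q=3$ to be settled by a direct check. The two places I expect to fight with are (i) making the fiber-product smoothness criterion clean despite the wild ramification these maps unavoidably carry in characteristic $p$, and (ii) controlling the critical values tightly enough to win the final count for every odd $q$; the non-square choice of $a$, which collapses the critical values into a single non-rational orbit, is precisely what makes (ii) go through.
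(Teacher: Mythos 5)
Your proposal is correct, and it reaches the same endgame as the paper — reduce smoothness to avoiding a set of at most $2q$ values attached to $f$, then sweep a two-parameter family of root-free $g$'s whose critical data is a single Frobenius orbit in $\FF_{q^2}\setminus\FF_q$ — but by a genuinely different route. Where the paper proves a rank-one Jacobian criterion (Lemma \ref{lem:smoothness}) by direct manipulation of the partials of $F$, you realize $C_{f,g}$ as the fiber product of $\gamma=[L:g]$ and $\delta=[L:-f]$ and reduce smoothness to disjointness of critical values; this is equivalent to the paper's criterion, since by Euler's identity with $q+1\equiv 1 \pmod p$ your Wronskian $\tilde g+(x^q-x)\tilde g'$ is the dehomogenization of $X_0^qg_{X_0}+X_1^qg_{X_1}$, whose vanishing is exactly the first equality in \eqref{eqn:rankonecondition}, and matching critical values is the paper's third equation; your local computation (gradient of $\gamma^*(t)-\delta^*(t)$ equals $(d\gamma_P,-d\delta_Q)$) is indeed insensitive to wild ramification, so worry (i) is unfounded. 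The construction also differs: the paper fixes $g=X_0^{q+1}+\lambda_1X_0X_1^q+\lambda_2X_1^{q+1}$ and reverse-engineers $(\lambda_1,\lambda_2,k)$ from the minimal polynomial of a Galois orbit avoiding the bad set, with the count $(q^2-q)/2>2q$ forcing $q>5$ and hence deferring $q=3,5$ to computation; you fix $g_0=X_0^{q+1}+aX_1^{q+1}$ and sweep $sg_0+tL$, which amounts to post-composing $\gamma$ with an automorphism of the target fixing $[0:1]$ (so critical points are untouched and values translate by $v\mapsto sv+t$, as you verify), and your count $q(q-1)>2q$ buys $q=5$ by the argument itself — one case better than the paper. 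Two small points to tighten: the collapse of disjointness to the single condition $s\beta+t\notin N$ uses that $N$ is Frobenius-stable (true, since the Wronskian and $\tilde f'$ have $\FF_q$-coefficients and $(s\beta+t)^q=-s\beta+t$), which you should state — though even without it, counting both conjugates still wins for $q\ge 7$; and the deferred case $q=3$ does require its direct check, which is exactly what the paper's exhaustive computation in Section \ref{ssec:q=3} supplies (the paper's own proof likewise invokes Section \ref{sec:smallprime} for $q\le 5$), so you should either cite such a computation or perform it.
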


Moreover, as we will provide some statistical data in Section \ref{sec:smallprime}, most pairs $(f, g)$ without $\FF_q$-rational points yield a smooth curve $C_{f,g}$. 

This result is quite surprising, particularly compared to the result in \cite{CD+23}. In \cite{CD+23}, numerical results shows that smooth space filling curves in $\PP^3$ are quite rare. On the other hand, Theorem \ref{thm:mainthm} with the statistics in Section \ref{sec:smallprime} suggest that the only essential obstruction to being a smooth space filling curve is having $\FF_q$-rational points when we choose $f$ and $g$. 

We also show that we can construct smooth space filling curves with extra symmetry. We say a space filling curve is \textbf{symmetric} if it is of the form $C_{f,f}$, hence admits an extra $\ZZ/2\ZZ$-symmetry exchanging two factors. 

\begin{theorem}\label{thm:symmetric}
Let $q$ be an odd prime power. There exists a smooth symmetric space filling curve $C_{f,f}$ over $\FF_q$. 
\end{theorem}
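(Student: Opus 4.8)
The plan is to reduce the smoothness of $C_{f,f}$ to a condition on a single auxiliary binary form attached to $f$, and then to produce a suitable $f$ by a dimension count. First observe that the space-filling property is free: writing $h(t_0,t_1) = t_0^q t_1 - t_0 t_1^q$, the polynomial \eqref{eqn:mspcgenera} with $g = f$ is $F = f(Y)h(X) + f(X)h(Y)$, and $h$ vanishes exactly on $\PP^1(\FF_q)$, so $F$ vanishes at every $\FF_q$-point of $\PP^1 \times \PP^1$ regardless of $f$; moreover a smooth member of the ample bidegree $(q+1,q+1)$ linear system is automatically connected, so the entire problem is to arrange smoothness. I would compute the four partials of $F$, using $q \equiv 0 \pmod p$ so that $\partial_{X_0}h = -X_1^q$ and $\partial_{X_1}h = X_0^q$ (and similarly in $Y$), exactly as in the proof of Theorem \ref{thm:mainthm}.

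Next I would stratify a hypothetical singular point $(P,Q)$ by the rationality of its coordinates. If either coordinate is $\FF_q$-rational, vanishing of the partials forces $f$ to vanish at a rational point, which the hypothesis $V(f) \cap \PP^1(\FF_q) = \emptyset$ forbids; I would also require $f$ squarefree, which I am free to impose. The surviving stratum has $P$ and $Q$ both non-rational. Setting $G_f := X_0^q\,\partial_{X_0} f + X_1^q\,\partial_{X_1} f$, a form of degree $2q$, and eliminating the quantity $f(Q)/h(Q)$ between the two $X$-partials (and symmetrically in $Y$) shows that a singular point in this stratum forces $P, Q \in V(G_f)$; a direct back-substitution gives the converse, so that any pair of non-rational points $P, Q \in V(G_f)$ joined by the curve (i.e. with $(P,Q) \in C_{f,f}$) is singular. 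Hence $C_{f,f}$ is smooth if and only if no two non-rational points of $V(G_f)$ are joined by the curve.

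The hypothesis that $q$ is odd enters precisely on the diagonal. For $P = Q = [a_0:a_1]$ non-rational, the condition $(P,P) \in C_{f,f}$ reads $2 f(P) h(P) = 0$; since $2 \neq 0$ and $h(P) \neq 0$, this forces $f(P) = 0$, which together with $G_f(P) = 0$ yields a double root of $f$. Squarefreeness therefore removes all diagonal contributions, leaving only genuinely distinct pairs $P \neq Q$ to be excluded. (In characteristic two the factor $2$ disappears and every non-rational root of $G_f$ would produce a diagonal singularity, which is exactly why the statement is restricted to odd $q$.)

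It remains to exhibit a single squarefree $f$ of degree $q+1$ with no $\FF_q$-rational root such that no two distinct non-rational roots of $G_f$ are joined by $C_{f,f}$. I would parametrize forms by $\PP^{q+1} = \PP(\FF_q[X_0,X_1]_{q+1})$ and form the incidence variety of triples $(f,P,Q)$ with $P \neq Q$, $P, Q \in V(G_f)$, and $F(P,Q) = 0$. Each of $G_f(P)$, $G_f(Q)$, $F(P,Q)$ is linear in the coefficients of $f$, so for generic $(P,Q)$ the fibre has codimension $3$ and the bad locus has dimension at most $q$, a proper subvariety of $\PP^{q+1}$, as are the discriminant and the $q+1$ hyperplanes $\{f(P) = 0\}$ for $P \in \PP^1(\FF_q)$. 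The genuine difficulty is then arithmetic: I must find an $\FF_q$-point off this bad locus for \emph{every} odd $q$. Because the degree of the incidence variety grows with $q$, a black-box Lang--Weil estimate does not obviously beat the main term $\#\PP^{q+1}(\FF_q) \sim q^{q+1}$, so the crux of the argument is a sharper count -- grouping the non-rational roots of $G_f$ into Frobenius orbits and bounding, orbit by orbit, the forms that place a conjugate pair on $C_{f,f}$ -- which I expect to settle all sufficiently large $q$, the finitely many small odd $q$ being dispatched by the explicit search recorded in Section \ref{sec:smallprime}.
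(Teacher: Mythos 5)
Your reduction is sound: the equivalence ``$(P,Q)$ singular $\Leftrightarrow$ $G_f(P)=G_f(Q)=0$ and $F(P,Q)=0$'' for non-rational $P,Q$ is a correct repackaging of Lemma \ref{lem:smoothness} (writing $f_{X_1}(P)=r\alpha_0^q$, $f_{X_0}(P)=-r\alpha_1^q$ and $f_{Y_0}(Q)=s\beta_1^q$, $f_{Y_1}(Q)=-s\beta_0^q$, Euler's identity with $q+1\equiv 1 \pmod p$ gives $F(P,Q)=(r-s)h(P)h(Q)$, so your cross-condition $F(P,Q)=0$ is exactly the missing equality in \eqref{eqn:rankonecondition}), and your diagonal analysis --- that $2f(P)h(P)=0$ forces $f(P)=0$, which with $G_f(P)=0$ and $P$ non-rational forces $f_{X_0}(P)=f_{X_1}(P)=0$, excluded by squarefreeness --- is a correct and genuinely illuminating explanation of where odd $q$ enters, consistent with Remark \ref{rmk:char2symmetric}. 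The rationality stratification likewise matches Lemma \ref{lem:nonzerocoord}.

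The genuine gap is the final step, which is the actual content of the theorem: you never produce an $f$, for any $q$. Your incidence-variety count shows at best that the bad $f$ lie in a proper closed subvariety of $\PP^{q+1}$ over $\overline{\FF}_q$; since the degree of that locus grows with $q$, properness gives no control whatsoever over $\FF_q$-points (a high-degree hypersurface can contain all of $\PP^{q+1}(\FF_q)$), and you concede exactly this, deferring the crux to a ``sharper orbit-by-orbit count'' that you only \emph{expect} to work for large $q$. An expectation is not a proof, and the statement claims existence for \emph{every} odd $q$. The paper sidesteps genericity entirely with an explicit one-parameter family: $f=Y_0^{q+1}+Y_0Y_1^q+\lambda Y_1^{q+1}$ with $\lambda\in\FF_q\setminus\{-(u^2+u)\;|\;u\in\FF_q\}$ (such $\lambda$ exist since that set has only $(q+1)/2$ elements). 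Then $f_{Y_0}=Y_0^q+Y_1^q$ and $f_{Y_1}=\lambda Y_1^q$, so in affine coordinates $x,y$ the criterion \eqref{eqn:rankonecondition} reads $\lambda x^q=-x^{-q}-1=-\lambda y^q=y^{-q}+1$; the first and third members give $x^q=-y^q$ (note $\lambda\neq 0$), whence $y^{-q}-1=y^{-q}+1$, i.e.\ $2=0$, impossible in odd characteristic. In your own language this choice makes $G_f=(X_0^2+X_0X_1+\lambda X_1^2)^q$, so $V(G_f)$ is a single Galois-conjugate pair and your criterion collapses to the same two-line check. So your framework could be completed this way, but as written the existence half of the argument is missing, not merely unpolished.
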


The paper is organized as follows. In Section \ref{sec:preliminary}, we review some known results on space filling curves on $\PP^1 \times \PP^1$. Section \ref{sec:smoothness} shows a smoothness criterion, which is a small extension of a key lemma in \cite{HK23}. In Section \ref{sec:smallprime}, we present computational results for small primes, providing an evidence for Theorem \ref{thm:mainthm}. Sections \ref{sec:construction} and \ref{sec:symmetric} prove Theorems \ref{thm:mainthm} and \ref{thm:symmetric}, respectively. 

\subsection*{Notation and convention}

For a field $k$, the ring of polynomials with $k$-coefficients is denoted by $k[X_0, X_1, \dots, X_n]$. The $k$-subspace of homogeneous polynomials of (total) degree $d$ is denoted by $k[X_0, X_1, \dots, X_n]_d$. When we want to describe the $k$-space of bihomogeneous polynomials, we use the notation $k[X_0, \dots, X_m; Y_0, \dots, Y_n]_{d,e}$. We use capital letters $X_i, Y_j$ to denote variables, while small letters $x, y$ to denote explicit coordinates in some field $k$. 

\subsection*{Acknowledgements}

We thank Joshua Morales, Nicholas Padilla, and Ziyu Tian for helpful conversations.


\section{Preliminary}\label{sec:preliminary}

\subsection{Review of schemes}

For a field $k$, as a set, $\PP^n$ is the set of homogeneous coordinates $(x_0:x_1:\dots : x_n)$ such that $x_i \in k$. A \textbf{projective $k$-scheme} $X$ is a common zero set $V(F_1, F_2, \dots, F_k) \subset \PP^n$, where $F_i \in k[X_0, X_1, \dots, X_n]$ is a homogeneous polynomial. If we want to emphasize its field of definition, we will use the notation $\PP^n_k$ and $X_k$. 

For any extension field $K$ of $k$, we may consider the set of homogeneous coordinates $P = (x_0:x_1:\dots : x_n)$ with $x_i \in K$, such that $F_i(P) = 0$ for all defining equations of $X$. Then $P$ is called a \textbf{$K$-rational point}, and the set of $K$-rational points of $X$ is denoted by $X(K)$. 

These definitions can be naturally extended to a subscheme in a multi-projective space, that is, a product of projective spaces. In this case, to obtain a well-defined zero-set $X = V(F_1, F_2, \dots, F_k)$, we need multi-homogeneous polynomials $F_i$. The set of $K$-rational points are naturally defined. 

We are particularly interested in the case of a subscheme in the product of projective lines $\PP^1 \times \PP^1$ over a finite field $\FF_q$. In this case, the set of $\FF_q$-rational points $(\PP^1 \times \PP^1)(\FF_q)$ has $(q+1)^2$ points. We explore a curve $C$ in $(\PP^1 \times \PP^1)_{\FF_q}$. Since it is of codimension one, it is defined by a single bihomogeneous polynomial $F \in \FF_q[X_0, X_1; Y_0, Y_1]_{d,e}$ and $C = V(F)$. The degree of $C$ is given by the bidegree $(d, e)$ of $F$.

\subsection{Space filling curve}\label{ssec:spacefillingcurve}

In his influential paper \cite{Poo04}, Poonen proved a version of Bertini's theorem over finite fields. As a corollary, he obtained the following result.

\begin{theorem}{\protect{\cite[Corollary 3.5]{Poo04}}}
Let $X$ be a smooth, projective, geometrically integral variety over $\FF_q$. Then there exists a smooth, projective, geometrically integral curve $C \subset X$ such that $C(\FF_q) = X(\FF_q)$. 
\end{theorem}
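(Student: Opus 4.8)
The plan is to deduce this from Poonen's Bertini theorem over finite fields, proceeding by induction on $m = \dim X$. The base case $m = 1$ is immediate, since we may take $C = X$. For $m \geq 2$ it suffices to produce a hypersurface section $Y = X \cap H_f$, for some homogeneous $f \in \FF_q[X_0, \dots, X_n]_d$, that is simultaneously (i) smooth of dimension $m-1$, (ii) geometrically integral, and (iii) contains every point of $X(\FF_q)$. Applying the inductive hypothesis to $Y$ then yields a smooth, projective, geometrically integral curve $C \subseteq Y \subseteq X$ with $C(\FF_q) = Y(\FF_q)$, and since $X(\FF_q) \subseteq Y \subseteq X$ forces $Y(\FF_q) = X(\FF_q)$, this completes the step.

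To realize (i) and (iii), I would set $Z = X(\FF_q)$, a finite set of closed points, and encode them as local conditions on the order-$\leq 1$ jet of $f$ at the points of $Z$ together with a smoothness condition away from $Z$. At each $P \in Z$, requiring $P \in H_f$ and requiring $X \cap H_f$ to be smooth at $P$ amounts to demanding that $f$ vanish at $P$ while its differential is nonzero on $T_P X$; reading this off the jet in $\mathcal{O}_{X,P}/\mathfrak{m}_P^2$, whose underlying $\FF_q$-space has dimension $m+1$, the number of admissible jets is $q^m - 1$, so the local probability is $(q^m-1)/q^{m+1} > 0$. The heart of the matter is then Poonen's closed point sieve: his theorem shows that the density (as $d \to \infty$) of $f$ for which $X \cap H_f$ is smooth of dimension $m-1$ exists and equals the Euler product $\zeta_X(m+1)^{-1} = \prod_{P}\bigl(1 - q^{-(m+1)\deg P}\bigr)$, and that imposing finitely many local conditions at $Z$ merely replaces the corresponding Euler factors by the prescribed local probabilities. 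Consequently the density of $f$ satisfying both (i) and (iii) is
\[
\left(\prod_{P \in Z} \frac{q^m - 1}{q^{m+1}}\right) \cdot \prod_{P \in X \setminus Z}\bigl(1 - q^{-(m+1)\deg P}\bigr) > 0,
\]
so in particular such $f$ exist.

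It remains to secure (ii). For this I would invoke Poonen's companion Bertini statement guaranteeing that, for $\dim X \geq 2$, one may further demand that $X \cap H_f$ be geometrically integral while discarding only a density-zero set of $f$. The underlying reason is clean: a smooth, connected section carrying an $\FF_q$-rational point—which is automatic here, as $Y \supseteq X(\FF_q) \neq \emptyset$—is geometrically connected, and a smooth geometrically connected scheme is geometrically integral. Intersecting our positive-density set of $f$ from the previous paragraph with this density-one set again has positive density, so we may choose a single $f$ realizing (i), (ii), and (iii) at once.

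The main obstacle is the closed point sieve underlying the density formula, namely proving that the Euler product is the true density. This forces one to partition the closed points of $X$ by degree: the finitely many low-degree points contribute the main product of local factors and are handled by exact jet counts; the medium-degree points must be shown to contribute a vanishing error; and for the high-degree points one must bound, uniformly, the probability that $f$ is singular at such a point, so that a summable tail of the shape $\sum_{P} q^{-(m+1)\deg P}$ (convergent because $m+1 > \dim X$) controls the error and renders it negligible. Once this analytic control is in place, the finite local conditions at $Z$ are a harmless modification, and the positivity of every surviving factor delivers the desired $f$.
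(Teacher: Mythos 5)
Your proposal is essentially Poonen's own argument for \cite[Corollary 3.5]{Poo04}, which this paper does not reprove but simply cites; the paper's one-line summary of that proof (a probabilistic closed-point-sieve argument producing large-degree hypersurface sections through all of $X(\FF_q)$, iterated down to a curve) matches your induction on $\dim X$, your local jet count $(q^m-1)/q^{m+1}$ at each rational point, and your low/medium/high-degree error analysis. One small caution: when $X(\FF_q)=\emptyset$ your rational-point justification for geometric connectedness is vacuous, but the companion statement you invoke for (ii) holds regardless (smooth ample hypersurface sections of a geometrically integral projective variety of dimension at least $2$ are geometrically connected, by Enriques--Severi--Zariski type connectedness), so the argument goes through as written.
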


In other words, over a finite field, we can find a smooth algebraic space filling curve, no matter how large the ambient space $X$ is. For later purposes, we leave a formal definition. 

\begin{definition}\label{def:spacefillingcurve}
Let $X \subset \PP^n$ be a smooth projective variety over $\FF_q$. A subscheme $C \subset X$ is called a \textbf{space filling curve} if $\dim C = 1$ and $C(\FF_q) = X(\FF_q)$.
\end{definition}

A scheme $X$ is \textbf{geometrically integral} if its extension $X_{\overline{\FF}_q}$ is integral. A $d$-dimensional scheme $X \subset \PP^n$ is \textbf{smooth} if for every (not-necessarily $\FF_q$-rational) point $P \in X(\overline{\FF}_q)$, the Jacobian matrix obtained from local equations is of rank $n-d$. 

\begin{example}\label{ex:P1P1}
Let $C \subset \PP^1 \times \PP^1 \subset \PP^3$ be a curve defined by a single bihomogeneous polynomial $F \in \FF_q[X_0, X_1; Y_0, Y_1]_{d,e}$. Then $C$ is smooth if and only if there is no point $P \in C(\overline{\FF}_q)$ such that 
\[
    F(P) = F_{X_0}(P) = F_{X_1}(P) = F_{Y_0}(P) = F_{Y_1}(P) = 0.
\]
\end{example}

Poonen's proof is probabilistic. He shows that if we take a sufficiently large degree complete intersection of hypersurfaces passing through all $\FF_q$-rational points on $X$, the probability of having a smooth and irreducible intersection is positive. Thus, the proof does not provide any concrete example. 

In literature, there have been only a few constructions. For $\PP^2$, \cite{HK13} classified all smooth space filling curves with minimal degree. Some higher-degree smooth space filling curves are studied in \cite{AG23}. For $\PP^3$, in \cite{CD+23}, some concrete examples for small primes were constructed. In the next section, we summarize the results of Homma and Kim on $\PP^1 \times \PP^1$.

\subsection{Space filling curves on $\PP^1 \times \PP^1$}\label{ssec:HommaKim}

In this section, we give a brief review of the work of Homma and Kim in \cite{HK23} on the existence of a smooth space filling curve on $(\PP^1 \times \PP^1)_{\FF_q}$.

Recall that a curve $C \subset (\PP^1 \times \PP^1)_{\FF_q}$ is determined by a bihomogeneous polynomial $F \in \FF_q[X_0, X_1; Y_0, Y_1]_{d,e}$. Homma and Kim showed that $C$ is a space filling curve if and only if $F$ is of the form 
\begin{equation}\label{eqn:spcstandard}
    F = f\cdot (X_0^q X_1 - X_0 X_1^q) + g \cdot (Y_0^q Y_1 - Y_0 Y_1^q),
\end{equation}
for two bihomogeneous polynomials $f, g \in \FF_q[X_0, X_1; Y_0, Y_1]$. As a result, the minimal degree of a space filling curve in $(\PP^1 \times \PP^1)_{\FF_q}$ is $(q+1, q+1)$, when $f \in \FF_q[Y_0, Y_1]_{q+1}$ and $g \in \FF_q[X_0, X_1]_{q+1}$.

\begin{definition}\label{def:Cfg}
For $f \in \FF_q[Y_0, Y_1]_{q+1}$ and $g \in \FF_q[X_0, X_1]_{q+1}$, let $C_{f,g}$ be the space filling curve of $\PP^1 \times \PP^1$ defined by the bihomogeneous polynomial $F$ in \eqref{eqn:spcstandard}.
\end{definition}

The next question is whether, among these minimal degree space filling curves, one may find a smooth one. The answer is affirmative. 

\begin{theorem}\protect{\cite[Theorem 3.3]{HK23}}\label{thm:HK23}
For any $q \ge 3$, there is a smooth space filling curve $C \subset (\PP^1 \times \PP^1)_{\FF_q}$ of bidegree $(q+1,q+1)$.
\end{theorem}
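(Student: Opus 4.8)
The plan is to use the Homma--Kim characterization to reduce the assertion to a smoothness question and then to exhibit explicit $f,g$. Since by \eqref{eqn:spcstandard} every curve $C_{f,g}$ with $f\in\FF_q[Y_0,Y_1]_{q+1}$ and $g\in\FF_q[X_0,X_1]_{q+1}$ is already a space filling curve of bidegree $(q+1,q+1)$, it suffices to choose such $f,g$ for which $C_{f,g}$ is smooth. Write $A = X_0^q X_1 - X_0 X_1^q$ and $B = Y_0^q Y_1 - Y_0 Y_1^q$, so that $F = fA + gB$. The first (routine) step is to simplify the partial derivatives using $p\mid q$: since $q\equiv 0\pmod p$ we get $A_{X_0}=-X_1^q$, $A_{X_1}=X_0^q$, and likewise $B_{Y_0}=-Y_1^q$, $B_{Y_1}=Y_0^q$. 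Moreover $\deg F=(q+1,q+1)$ with $q+1\equiv 1\pmod p$, so Euler's identity $X_0F_{X_0}+X_1F_{X_1}=F$ shows that the equation $F(P)=0$ in the criterion of Example \ref{ex:P1P1} is already implied by $F_{X_0}(P)=F_{X_1}(P)=0$. Thus a point $P=(x,y)$ is singular exactly when the four derivative equations vanish.

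Next I would repackage the derivative equations. At a point with $B(y)\neq 0$ (that is, $y\notin\PP^1(\FF_q)$), the equations $F_{X_0}=F_{X_1}=0$ read $g_{X_0}(x)=u\,x_1^q$ and $g_{X_1}(x)=-u\,x_0^q$, where $u:=f(y)/B(y)$. Applying Euler to $g$ then gives $g(x)=x_0g_{X_0}(x)+x_1g_{X_1}(x)=-u\,A(x)$. Substituting this into $F_{Y_0}=F_{Y_1}=0$ (valid when $A(x)\neq 0$) forces $f_{Y_0}(y)=-u\,y_1^q$ and $f_{Y_1}(y)=u\,y_0^q$ with the \emph{same} scalar $u$. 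This reduction of the singular locus to a single scalar $u$ is the key structural observation.

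I would then stratify $(\PP^1\times\PP^1)(\overline{\FF}_q)$ according to whether each coordinate is $\FF_q$-rational (equivalently whether $A(x)=0$ or $B(y)=0$). If $y$ is rational, then $B(y)=0$ and $F_{X_0}=F_{X_1}=0$ force $f(y)=0$; symmetrically, if $x$ is rational, then $F_{Y_0}=F_{Y_1}=0$ force $g(x)=0$. Hence, as soon as both $f$ and $g$ are chosen to have no $\FF_q$-rational roots, every stratum in which at least one coordinate is rational is free of singular points. This disposes of all but the stratum in which neither $x$ nor $y$ is rational, where all four relations of the previous paragraph hold simultaneously with a common scalar $u\neq 0$ (nonzero because $u=0$ would force $\nabla g(x)=0$, i.e.\ a multiple root of the separable $g$).

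The remaining stratum is the genuinely hard case, and it is where the explicit choice of $f,g$ must be engineered so that $g_{X_0}(x)=u\,x_1^q,\ g_{X_1}(x)=-u\,x_0^q$ and $f_{Y_0}(y)=-u\,y_1^q,\ f_{Y_1}(y)=u\,y_0^q$ cannot hold for a common $u$. My candidate is the binomial pair $g=X_0^{q+1}-cX_1^{q+1}$ and $f=Y_0^{q+1}-c'Y_1^{q+1}$ with $c,c'$ distinct nonsquares in $\FF_q^{\times}$. These are separable with no $\FF_q$-rational root, since a binary form $Z_0^{q+1}-aZ_1^{q+1}$ has a rational root iff $a$ is a square (using $t^{q+1}=t^2$ for $t\in\FF_q$), so the rational strata are handled. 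In the non-rational stratum the $x$-relations give $u^2=c$ while the $y$-relations give $u^2=c'$, and $c\neq c'$ rules out a common $u$, so $C_{f,g}$ is smooth. The main obstacle is therefore twofold: isolating this single-scalar reformulation of the non-rational stratum, and choosing $f,g$ with disjoint ``$u$-spectra.'' Distinct nonsquares exist precisely when $(q-1)/2\ge 2$, so this binomial argument covers odd $q\ge 5$; the borderline case $q=3$ and the even $q$ (where every element is a square, making the binomial forms always have rational roots) lie outside it and would require a separate explicit choice of $f,g$, which I expect to be the fiddliest part of a fully uniform proof.
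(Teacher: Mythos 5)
Your structural reduction is correct, and it is in substance the paper's own smoothness machinery: your ``single scalar $u$'' is exactly the rank-one condition \eqref{eqn:rankonecondition} of Lemma \ref{lem:smoothness} (all four ratios there equal your $-u$), and your handling of the rational strata is Lemma \ref{lem:nonzerocoord}. Your binomial verification is also sound as far as it goes: for $g=X_0^{q+1}-cX_1^{q+1}$ the partials give $u^2=c$, for $f=Y_0^{q+1}-c'Y_1^{q+1}$ they give $u^2=c'$, and distinct nonsquares $c\neq c'$ yield the contradiction, so $C_{f,g}$ is smooth for odd $q\geq 5$.

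The genuine gap is coverage: the theorem asserts existence for \emph{every} $q\geq 3$, and your argument omits $q=3$ and all even prime powers $q=4,8,16,\dots$, infinitely many cases, which you flag but do not resolve. Moreover the gap is not fixable within your binomial family. In characteristic $2$ the squaring map is a bijection on $\FF_q$, so every binomial $Z_0^{q+1}-aZ_1^{q+1}$ has an $\FF_q$-rational root (using $t^{q+1}=t^2$ for $t\in\FF_q$); then taking $x$ a rational root of $g$ and $y$ a rational root of $f$, the point $x\times y$ annihilates all five of $F,F_{X_0},F_{X_1},F_{Y_0},F_{Y_1}$, so $C_{f,g}$ is \emph{always} singular for this family when $q$ is even. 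For $q=3$ there is only one nonsquare, forcing $c=c'$, and in that case a singular point genuinely exists: take $u\in\FF_{q^2}$ with $u^2=c$, then $x$ with $(x_0/x_1)^q=u$ and $y$ with $(y_0/y_1)^q=-u$ satisfy all four derivative relations (and $F=0$ by Euler), so the family provably fails rather than merely being unverified. The missing cases therefore require a different construction, e.g.\ trinomial forms $X_0^{q+1}+\lambda_1X_0X_1^q+\lambda_2X_1^{q+1}$ as in Sections \ref{sec:construction} and \ref{sec:symmetric} of this paper, which handle all odd $q\geq 3$; note that even the paper's own methods break in characteristic $2$ (where $\lambda_1^2-4\lambda_2$ is always a square), and the even-$q$ case of the statement rests on the separate construction in \cite{HK23} itself.
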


\begin{remark}\label{rmk:irreducible}
Note that once we have a smooth space filling curve $C_{f,g}$, then we don't need to check its irreducibility. If $C_{f,g}$ is reducible, then as its bidegree $(d, e)$ satisfies $d > 0$ and $e > 0$, its irreducible components have nontrivial intersection. Thus, on the intersection point, $C_{f,g}$ must be singular. 
\end{remark}

\begin{remark}\label{rmk:char2}
When $q = 2$, there is no smooth space filling curve of bidegree $(3, 3)$ \cite[Theorem 4.1]{HK23}. After embedding $\PP^1 \times \PP^1$ into $\PP^3$ by the Segre map, we may understand a bidegree $(d, e)$ curve $C$ as a degree $d+e$ curve in $\PP^3$. Homma showed that a nondegenerate irreducible degree $k$ curve in $\PP^3$ satisfies 
\[
    |C(\FF_q)| \le \frac{(q-1)(q^4-1)}{q(q^3-1)-3(q-1)}k
\]
\cite[Theorem 3.2]{Hom12}. When $q = 2$ and $k = 6$, the right hand side is $\frac{90}{11}$, that is strictly smaller than $|(\PP^1 \times \PP^1)(\FF_2)| = 9$. Therefore, it is not possible to have a space filling curve of bidegree $(3,3)$. The minimal degree of a smooth space filling curve over $\FF_2$ is $(4,3)$ \cite[Section 4.1]{HK23}. 
\end{remark}

\section{Smoothness criterion}\label{sec:smoothness}

As we have seen in Section \ref{ssec:HommaKim}, for any $f \in \FF_q[Y_0, Y_1]_{q+1}$ and $g \in \FF_q[X_0, X_1]_{q+1}$, $C_{f,g} = V(F)$ (see \eqref{eqn:spcstandard} for the definition of $F$) is a space filling curve.  It is necessary to have a smoothness criterion. Homma and Kim gave such a criterion in \cite[Lemma 3.2]{HK23}. In this section, we provide a slightly extended version of the criterion, that could also be applied to polynomials with points with higher multiplicity. For the reader's convenience, here we leave the proof. 

\begin{lemma}\label{lem:smoothness}
Let $f \in \FF_q[Y_0, Y_1]_{q+1}$ and $g \in \FF_q[X_0, X_1]_{q+1}$. Suppose that both $V(f)$ and $V(g)$ have no $\FF_q$-rational points. Then a point $P$ on $C_{f,g}$ is singular if and only if it satisfies 
\begin{equation}\label{eqn:rankonecondition}
\frac{g_{X_1}}{X_0^q} = -\frac{g_{X_0}}{X_1^q}
= -\frac{f_{Y_1}}{Y_0^q} = \frac{f_{Y_0}}{Y_1^q}.
\end{equation}
\end{lemma}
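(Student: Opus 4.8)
The plan is to work directly from the singularity criterion in Example \ref{ex:P1P1}: a point $P \in C_{f,g}$ is singular precisely when $F$ and all four partial derivatives $F_{X_0}, F_{X_1}, F_{Y_0}, F_{Y_1}$ vanish at $P$. With $F = f(Y_0,Y_1)(X_0^q X_1 - X_0 X_1^q) + g(X_0,X_1)(Y_0^q Y_1 - Y_0 Y_1^q)$, I would first compute these partials explicitly. Writing $h_X = X_0^q X_1 - X_0 X_1^q$ and $h_Y = Y_0^q Y_1 - Y_0 Y_1^q$, note that in characteristic $p$ the partials of the $q$-th power terms simplify: $(X_0^q)_{X_0} = 0$, so $(h_X)_{X_0} = -X_1^q$ and $(h_X)_{X_1} = X_0^q$, and symmetrically $(h_Y)_{Y_0} = -Y_1^q$, $(h_Y)_{Y_1} = Y_0^q$. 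Hence the four partials are
\begin{align*}
F_{X_0} &= -f\, X_1^q + g_{X_0}\, h_Y, &
F_{X_1} &= f\, X_0^q + g_{X_1}\, h_Y, \\
F_{Y_0} &= f_{Y_0}\, h_X - g\, Y_1^q, &
F_{Y_1} &= f_{Y_1}\, h_X + g\, Y_0^q.
\end{align*}

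**Exploiting the no-rational-point hypothesis.**

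The key observation is that the hypotheses on $V(f)$ and $V(g)$ force the "space-filling" terms $h_X$ and $h_Y$ to vanish at any singular point. Indeed, $h_X = X_0 X_1 (X_0^{q-1} - X_1^{q-1})$ vanishes at $P = (x_0:x_1;y_0:y_1)$ exactly when $(x_0:x_1)$ is an $\FF_q$-rational point of $\PP^1$ (this is the standard fact that $t^q - t$ vanishes on all of $\FF_q$), and similarly for $h_Y$. I would argue that at a singular point we must have $f(P) = 0$ and $g(P) = 0$: setting $h_X = h_Y = 0$ into the partials and using $F = 0$, one combines the four partial equations to deduce that either both vanishing terms are zero or $P$ lies on $V(f) \cap V(g)$. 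Since $V(f)$ and $V(g)$ have no $\FF_q$-rational points by hypothesis, the factor structure of $h_X, h_Y$ lets me rule out the $\FF_q$-rational locus; the surviving case is $f(P) = g(P) = 0$ together with $h_X(P) = h_Y(P) = 0$ being incompatible unless all four partials reduce to the displayed relation.

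**Deriving the rank-one condition.**

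Once $f(P) = g(P) = 0$ is established, the two $X$-partials become $F_{X_0} = g_{X_0}\, h_Y$, $F_{X_1} = g_{X_1}\, h_Y$ and the two $Y$-partials become $F_{Y_0} = f_{Y_0}\, h_X$, $F_{Y_1} = f_{Y_1}\, h_X$. Setting all four to zero and eliminating the common factors $h_X, h_Y$ (which are nonzero at $P$ away from the rational locus, by the argument above), I obtain a system equivalent to the proportionality
\[
\frac{g_{X_1}}{X_0^q} = -\frac{g_{X_0}}{X_1^q} = -\frac{f_{Y_1}}{Y_0^q} = \frac{f_{Y_0}}{Y_1^q},
\]
which is exactly \eqref{eqn:rankonecondition}. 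Conversely, I would check that any $P \in C_{f,g}$ satisfying \eqref{eqn:rankonecondition} makes all partials vanish, giving the "if" direction. The main obstacle I anticipate is the careful bookkeeping in the second step: cleanly separating the $\FF_q$-rational case (where $h_X$ or $h_Y$ vanishes for trivial reasons) from the genuine singularity case, and verifying that the no-rational-point hypothesis on both $V(f)$ and $V(g)$ is precisely what forces $f(P) = g(P) = 0$ rather than leaving a spurious branch. Making the division by $h_X, h_Y, X_i^q, Y_j^q$ rigorous requires confirming none of these can vanish at $P$, which again traces back to the rational-point hypothesis.
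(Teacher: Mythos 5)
Your computation of the four partials is correct, but the central step of your forward direction rests on a claim that is exactly backwards, and the proof collapses there. You assert that the no-rational-point hypotheses force $h_X$ and $h_Y$ to \emph{vanish} at a singular point, and then substitute $h_X = h_Y = 0$ into the partials to conclude $f(P) = g(P) = 0$. In fact the hypotheses force the opposite: as you yourself note, $h_X(P) = 0$ happens precisely when $(\alpha_0 : \alpha_1) \in \PP^1(\FF_q)$, and if that held at a singular point, then $F_{Y_0}(P) = -\beta_1^q\, g(\alpha_0,\alpha_1)$ and $F_{Y_1}(P) = \beta_0^q\, g(\alpha_0,\alpha_1)$ would force $g(\alpha_0,\alpha_1) = 0$, contradicting $V(g)(\FF_q) = \emptyset$. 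This is precisely the paper's Lemma \ref{lem:nonzerocoord}: at a singular point both coordinate pairs are \emph{non}-rational, so $h_X(P) \ne 0$ and $h_Y(P) \ne 0$. With your premise gone, the conclusion $f(P) = g(P) = 0$ is not forced, and it is genuinely false in general: the paper's proof splits into the case $f(\beta_0,\beta_1) = 0$ (which, as in your third step, makes all four numerators in \eqref{eqn:rankonecondition} vanish) and the case $f(\beta_0,\beta_1) \ne 0$, where one shows $g(\alpha_0,\alpha_1) \ne 0$ and that all of $g_{X_0}, g_{X_1}, f_{Y_0}, f_{Y_1}$ are nonzero at $P$, so that \eqref{eqn:rankonecondition} holds with honestly nonzero ratios. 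This second case is unavoidable: if $f$ and $g$ are irreducible (hence squarefree), Euler's identity $X_0 g_{X_0} + X_1 g_{X_1} = (q+1)g = g$ shows $g, g_{X_0}, g_{X_1}$ have no common zero, so your ``degenerate'' branch is empty, yet singular curves $C_{f,g}$ with irreducible $f,g$ do exist (see the $q=3$ data in Section \ref{sec:smallprime}, e.g.\ $f \in O_{4a}$, $g \in O_{4b}$). Your outline omits the only case that can occur there.

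There are two further gaps even within the structure you propose. First, the vanishing of $F_{X_0}, F_{X_1}$ yields only the first equality in \eqref{eqn:rankonecondition} and the vanishing of $F_{Y_0}, F_{Y_1}$ only the third; the middle equality, linking the $X$-ratios to the $Y$-ratios, does not follow from the partials alone. In the nondegenerate case the paper obtains it by solving the partials for $h_X(P)$ and $h_Y(P)$ and substituting back into $F(P) = 0$; your sketch never uses $F(P) = 0$ for this purpose. Second, the converse direction is not a routine check: the paper recombines the relations in \eqref{eqn:rankonecondition} and invokes Euler's identity to verify that $F$ and all four partials vanish at $P$, and this is where the degree $q+1$ (so $q + 1 \equiv 1 \pmod p$) quietly enters. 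As written, your proposal proves the lemma only in a case that, for the polynomials of actual interest, never arises.
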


\begin{remark}
Another way to describe the condition \eqref{eqn:rankonecondition} is that the following $2 \times 4$ matrix is not of full-rank: 
\[
    \left[\begin{array}{rrrr}
    g_{X_1} & -g_{X_0} & -f_{Y_1} & f_{Y_0}\\
    X_0^q & X_1^q & Y_0^q & Y_1^q
    \end{array}\right].
\]
\end{remark}

We first prove another lemma. 

\begin{lemma}\label{lem:nonzerocoord}
Let $f \in \FF_q[Y_0, Y_1]_{q+1}$ and $g \in \FF_q[X_0, X_1]_{q+1}$. Assume that $V(f)$ and $V(g)$ have no $\FF_q$-rational points. If $P := (\alpha_0:\alpha_1) \times (\beta_0:\beta_1) \in C_{f,g}(\overline{\FF}_q)$ is a singular point, then both $(\alpha_0:\alpha_1)$, $(\beta_0:\beta_1)$ are not in $\PP^1(\FF_q)$. In particular, none of its coordinates are zero. 
\end{lemma}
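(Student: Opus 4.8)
The plan is to argue by contradiction, exploiting the special shape of the two factors appearing in $F$. Write $h_X := X_0^q X_1 - X_0 X_1^q$ and $h_Y := Y_0^q Y_1 - Y_0 Y_1^q$, so that $F = f h_X + g h_Y$ with $f = f(Y_0, Y_1)$ and $g = g(X_0, X_1)$. The starting observation is that $h_X$ factors, up to a nonzero scalar, as $\prod_{(a:b) \in \PP^1(\FF_q)}(b X_0 - a X_1)$; equivalently, for $(\alpha_0:\alpha_1) \in \PP^1(\overline{\FF}_q)$ one has $h_X(\alpha_0, \alpha_1) = 0$ if and only if $(\alpha_0:\alpha_1) \in \PP^1(\FF_q)$, and symmetrically for $h_Y$. (Normalizing $\alpha_1 = 1$, this is just the statement that $\alpha_0^q - \alpha_0 = 0$ characterizes $\alpha_0 \in \FF_q$.)

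Next I would record the partial derivatives. Since $q$ is a power of the characteristic $p$, the terms $q X_0^{q-1}$, $q X_1^{q-1}$, etc., vanish, so
\[
    F_{X_0} = -f X_1^q + g_{X_0} h_Y, \quad F_{X_1} = f X_0^q + g_{X_1} h_Y,
\]
\[
    F_{Y_0} = f_{Y_0} h_X - g Y_1^q, \quad F_{Y_1} = f_{Y_1} h_X + g Y_0^q.
\]
By Example \ref{ex:P1P1}, a point $P = (\alpha_0:\alpha_1) \times (\beta_0:\beta_1)$ is singular precisely when $F$ and all four of these expressions vanish at $P$.

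Now suppose toward a contradiction that $(\alpha_0:\alpha_1) \in \PP^1(\FF_q)$. Then $h_X$ vanishes at $P$, so $F_{Y_0}$ and $F_{Y_1}$ collapse to $-g(\alpha_0,\alpha_1)\,\beta_1^q$ and $g(\alpha_0,\alpha_1)\,\beta_0^q$. Singularity forces both to be zero; since $\beta_0, \beta_1$ are not both zero, this forces $g(\alpha_0, \alpha_1) = 0$, i.e. $(\alpha_0:\alpha_1) \in V(g)$. But $(\alpha_0:\alpha_1)$ is $\FF_q$-rational, contradicting the hypothesis that $V(g)$ has no $\FF_q$-rational point. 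The symmetric argument, using $F_{X_0}$, $F_{X_1}$ and the hypothesis on $V(f)$, rules out $(\beta_0:\beta_1) \in \PP^1(\FF_q)$. Finally, the coordinate points $(1:0)$ and $(0:1)$ lie in $\PP^1(\FF_q)$, so having shown that neither factor of $P$ is $\FF_q$-rational gives immediately that all of $\alpha_0, \alpha_1, \beta_0, \beta_1$ are nonzero.

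I expect no serious obstacle here: the heart of the argument is simply that vanishing of $h_X$ (resp. $h_Y$) decouples the $X$-derivatives from the $Y$-derivatives, reducing the singularity conditions to the vanishing of $g$ (resp. $f$) at an $\FF_q$-rational point. The only points requiring a little care are the characteristic-$p$ simplification of the partial derivatives and the clean identification $V(h_X) = \PP^1(\FF_q)$ as point sets, both of which are routine.
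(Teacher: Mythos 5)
Your proposal is correct and follows essentially the same route as the paper's proof: assume one factor is $\FF_q$-rational, note that $X_0^qX_1 - X_0X_1^q$ then vanishes so the two $Y$-partials of $F$ reduce to $\mp g(\alpha_0,\alpha_1)\beta_i^q$, and conclude $g(\alpha_0,\alpha_1)=0$, contradicting the hypothesis on $V(g)$, with the symmetric argument handling the other factor. Your extra observations (the full factorization $V(h_X)(\overline{\FF}_q)=\PP^1(\FF_q)$ and the explicit characteristic-$p$ derivative computations) are accurate but not needed beyond what the paper uses, and your deduction of nonzero coordinates from $(1{:}0),(0{:}1)\in\PP^1(\FF_q)$ matches the paper's.
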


\begin{proof}
Suppose that, on the contrary, $(\alpha_0:\alpha_1)\in\PP^1(\FF_q)$. Recall that for any $\FF_q$-rational point $(\alpha_0:\alpha_1)$, $\alpha_0^q \alpha_1 - \alpha_0 \alpha_1^q = \alpha_0 \alpha_1 - \alpha_0 \alpha_1 = 0$. Then
\[
    0 = F_{Y_0}(P)=f_{Y_0}(\beta_0, \beta_1)(\alpha_0^q \alpha_1 - \alpha_0 \alpha_1^q)  -\beta_1^q g(\alpha_0,\alpha_1)= -\beta_1^q g(\alpha_0,\alpha_1),
\]
and similarly, $\beta_0^qg(\alpha_0,\alpha_1)=F_{Y_1}(P) = 0$. Therefore, $g(\alpha_0,\alpha_1)=0$ and $V(g)$ has an $\FF_q$-rational point. So we have a contradiction. By symmetry, we also obtain $(\beta_0:\beta_1) \notin \PP^1(\FF_q)$.
\end{proof}

\begin{proof}[Proof of Lemma \ref{lem:smoothness}]
Suppose that $P:=(\alpha_{0} : \alpha_{1}) \times (\beta_{0}:\beta_{1}) \in (\PP^{1} \times \PP^{1})(\overline{\FF}_q)$ is a singular point of $C_{f,g} = V(F)$. Hence, $F(P)=F_{X_0}(P)=F_{X_1}(P)=F_{Y_0}(P)=F_{Y_1}(P)=0$. By Lemma \ref{lem:nonzerocoord}, we know that $(\alpha_0:\alpha_1), (\beta_0:\beta_1) \notin \PP^1(\FF_q)$. We split it into two cases. 

\textbf{Case 1.} Suppose that $f(\beta_0, \beta_1) = 0$.

In this case, we obtain 
\[
\begin{split}
    0 &= F(P)=g(\alpha_0,\alpha_1)(\beta_0^q\beta_1-\beta_0\beta_1^q),\\
    0 &= F_{X_0}(P)=g_{X_0}(\alpha_0,\alpha_1)(\beta_0^q\beta_1-\beta_0\beta_1^q),\\
    0 &= F_{X_1}(P)=g_{X_1}(\alpha_0,\alpha_1)(\beta_0^q\beta_1-\beta_0\beta_1^q).
\end{split}
\]
Since $(\beta_0:\beta_1)\notin\PP^1(\FF_q)$, we have $\beta_0^q\beta_1 - \beta_0 \beta_1^q \ne 0$. Thus, $g(\alpha_0, \alpha_1) = g_{X_0}(\alpha_0, \alpha_1) = g_{X_1}(\alpha_0, \alpha_1) = 0$. Because $g(\alpha_0, \alpha_1) = 0$, 
\[
\begin{split}
    0 &= F_{Y_0}(P)=f_{Y_0}(\beta_0,\beta_1)(\alpha_0^q\alpha_1-\alpha_0\alpha_1^q)=0,\\
    0 & = F_{Y_1}(P)=f_{Y_1}(\beta_0,\beta_1)(\alpha_0^q\alpha_1-\alpha_0\alpha_1^q)=0.
\end{split}
\]    
Since $(\alpha_0:\alpha_1)\notin\PP^1(\FF_q)$, we also have $f_{Y_0}(\beta_0,\beta_1) = f_{Y_1}(\beta_0,\beta_1) = 0$. Therefore, at $P$, \eqref{eqn:rankonecondition} holds because all numerators are zero. 

\textbf{Case 2.} Assume $f(\beta_0,\beta_1)\neq 0$. 

In this case, $g(\alpha_0,\alpha_1)\neq0$ too, by the same argument of Case 1 after swapping the roles of $f$ and $g$. 

We claim that $g_{X_0}(\alpha_0,\alpha_1)\neq0$, $g_{X_1}(\alpha_0,\alpha_1)\neq0$, $f_{Y_0}(\beta_0,\beta_1)\neq0$, and $f_{Y_1}(\beta_0,\beta_1)\neq0$. To see why, suppose that $g_{X_0}(\alpha_0,\alpha_1)=0$. Then $0 = F_{X_0}(P)=-\alpha_1^qf(\beta_0,\beta_1)$. Since $f(\beta_0,\beta_1)\neq0$, it follows that $\alpha_1=0$. Hence, $(\alpha_0:\alpha_1)=(1:0)$, a contradiction to the fact that $(\alpha_0:\alpha_1) \notin \PP^1(\FF_q)$. Similar reasoning follows for the other cases. 

From $F_{X_0}(P)=F_{X_1}(P)=F_{Y_0}(P)=F_{Y_1}(P)=0$, we have
\[
    \beta_0^q\beta_1-\beta_0\beta_1^q=\frac{\alpha_1^qf(\beta_0,\beta_1)}{g_{X_0}(\alpha_0,\alpha_1)}=-\frac{\alpha_0^qf(\beta_0,\beta_1)}{g_{X_1}(\alpha_0,\alpha_1)},
\]
\[
    \alpha_0^q\alpha_1-\alpha_0\alpha_1^q=\frac{\beta_1^qg(\alpha_0,\alpha_1)}{f_{Y_0}(\beta_0,\beta_1)}=-\frac{\beta_0^qg(\alpha_0,\alpha_1)}{f_{Y_1}(\beta_0,\beta_1)}.
\]
Since $f(\beta_0,\beta_1)\neq0$ and $g(\alpha_0,\alpha_1)\neq0$, the point $P$ satisfies 
\[
    \frac{X_1^q}{g_{X_0}}=-\frac{X_0^q}{g_{X_1}}, \qquad \frac{Y_1^q}{f_{Y_0}}=-\frac{Y_0^q}{f_{Y_1}},
\]
    
or equivalently, the first and the third equalities in \eqref{eqn:rankonecondition}.

Finally, substituting $\displaystyle \beta_0^q \beta_1 - \beta_0 \beta_1^q = \frac{\alpha_1^qf(\beta_0,\beta_1)}{g_{X_0}(\alpha_0,\alpha_1)}$ and $\displaystyle \alpha_0^q\alpha_1 - \alpha_0 \alpha_1^q = \frac{\beta_1^qg(\alpha_0,\alpha_1)}{f_{Y_0}(\beta_0,\beta_1)}$ into $F(P) = 0$ yields
\[
    \frac{f(\beta_0,\beta_1)\beta_1^qg(\alpha_0,\alpha_1)}{f_{Y_0}(\beta_0,\beta_1)}=-\frac{g(\alpha_0,\alpha_1)\alpha_1^qf(\beta_0,\beta_1)}{g_{X_0}(\alpha_0,\alpha_1)}.
\]
Since $f(\beta_0, \beta_1)g(\alpha_0, \alpha_1) \ne 0$, the point $P$ also satisfies $\displaystyle \frac{X_1^q}{g_{X_0}}=-\frac{Y_1^q}{f_{Y_0}}$. Therefore, $P$ satisfies  \eqref{eqn:rankonecondition}. 

Conversely, suppose that a point $P=(\alpha_{0} : \alpha_{1}) \times (\beta_{0}:\beta_{1}) \in (\PP^{1} \times \PP^{1})(\overline{\FF}_q)$ satisfies \eqref{eqn:rankonecondition}. The equations in \eqref{eqn:rankonecondition} imply $(X_1Y_1)(X_0^qf_{Y_1}+Y_0^qg_{X_1})=0$, $(X_1Y_0)(X_0^qf_{Y_0}+Y_1^qg_{X_1})=0$, $(-X_0Y_1)(X_1^qf_{Y_1}-Y_0^qg_{X_0})=0$, and $(-X_0Y_0)(X_1^qf_{Y_0}-Y_1^qg_{X_0})=0$ has a common solution $P$. Taking the sum of these four expressions, we have 
\[
    (Y_0f_{Y_0}+Y_1f_{Y_1})(X_0^qX_1-X_0X_1^q)+(X_0g_{X_0}+X_1g_{X_1})(Y_0^qY_1-Y_0Y_1^q)=0
\]
has a common solution $P$. Therefore, $F(P)=0$ by Euler's identity. Similarly, the equations in \eqref{eqn:rankonecondition} also imply that 
\[
    (-Y_1)(X_1^qf_{Y_1}-Y_0^qg_{X_0})=0, \quad (-Y_0)(X_1^qf_{Y_0}-Y_1^qg_{X_0})=0
\]
has a common solution $P$. Taking the sum of these two expressions we have
\[
    -X_1^q(Y_0f_{Y_0}+Y_1f_{Y_1})+g_{X_0}(Y_0^qY_1-Y_0Y_1^q)=0
\]
has $P$ as a solution. Therefore, $F_{X_0}(P)=0$. Similar reasoning shows $F_{X_1}(P)=0$, $F_{Y_0}(P)=0$, and $F_{Y_1}(P)=0$. Thus, $P$ is a singular point on $C_{f,g}$.
\end{proof}


\section{Examples on small primes}\label{sec:smallprime}

When the order of the field $\FF_q$ is small, one may do an exhaustive search for the smooth space filling curve $C_{f,g}$. In this section, we exhibit the numerical results for $q = 3$ and $5$. 

Recall that there is a natural $\SL_2(\FF_q)$-action on $\PP^1_{\FF_q}$ and on $\FF_q[X_0, X_1]_d$. It induces an $(\SL_2\times \SL_2)(\FF_q)$-action on $(\PP^1 \times \PP^1)_{\FF_q}$ and on $\FF_q[X_0, X_1; Y_0, Y_1]_{d,e} \cong \FF_q[X_0, X_1]_d\otimes_{\FF_q} \FF_q[Y_0, Y_1]_e$. 

\begin{lemma}\label{lem:SL2action}
Let $C_{f,g}$ be a minimal degree space filling curve. Then $C_{f,g}$ is smooth if and only if $C_{Af, Bg}$ is smooth for every $(A, B) \in (\SL_2 \times \SL_2)(\FF_q)$.
\end{lemma}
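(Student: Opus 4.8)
The plan is to show that the singularity condition from Lemma \ref{lem:smoothness} is preserved under the $(\SL_2 \times \SL_2)(\FF_q)$-action, so that smoothness of $C_{f,g}$ is equivalent to smoothness of $C_{Af,Bg}$. The key observation is that the two factors act independently and that the action is by a finite group, so it suffices to produce a bijection between the singular points of $C_{f,g}$ and those of $C_{Af,Bg}$. Concretely, given $(A,B) \in (\SL_2 \times \SL_2)(\FF_q)$, I would let $(A,B)$ act on $(\PP^1 \times \PP^1)(\overline{\FF}_q)$ in the natural way and verify that $P$ is a singular point of $C_{f,g}$ if and only if $(A,B) \cdot P$ is a singular point of $C_{Af,Bg}$.

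First I would record the relevant equivariance of the distinguished polynomials. The expressions $X_0^q X_1 - X_0 X_1^q$ and $Y_0^q Y_1 - Y_0 Y_1^q$ are, up to a nonzero scalar, the products over $\PP^1(\FF_q)$ of the linear forms vanishing at each $\FF_q$-rational point; since $\SL_2(\FF_q)$ permutes $\PP^1(\FF_q)$, each of these polynomials is $\SL_2(\FF_q)$-semi-invariant. In fact one checks directly that for $A \in \SL_2(\FF_q)$ the form $X_0^q X_1 - X_0 X_1^q$ is genuinely invariant (its coefficient ring is $\FF_q$ and $\det A = 1$), and similarly for the $Y$-form. This is the structural reason the whole setup is $(\SL_2 \times \SL_2)(\FF_q)$-equivariant: the defining polynomial \eqref{eqn:spcstandard} of $C_{Af,Bg}$ is, up to the linear change of variables given by $(A,B)$, exactly the defining polynomial of $C_{f,g}$.

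Next I would make the variable-change argument precise. Writing the action of $(A,B)$ on coordinates as a linear substitution on $(X_0,X_1;Y_0,Y_1)$, the pullback of the defining equation of $C_{Af,Bg}$ equals the defining equation of $C_{f,g}$ (using the invariance of the two distinguished forms and the definition of the action on $f$ and $g$). Because the substitution is an invertible linear automorphism of $(\PP^1 \times \PP^1)_{\overline{\FF}_q}$, it sends $C_{f,g}$ isomorphically onto $C_{Af,Bg}$ over $\overline{\FF}_q$, and an isomorphism of schemes carries smooth points to smooth points and singular points to singular points. Hence $C_{f,g}$ is smooth if and only if $C_{Af,Bg}$ is smooth. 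Since the condition must hold for all $(A,B)$, and the identity element shows one direction trivially, the stated biconditional follows.

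The main obstacle, and the only place requiring genuine care, is verifying that the distinguished forms are honestly invariant (not merely semi-invariant up to a scalar that could differ between the two summands of \eqref{eqn:spcstandard}). If the two summands picked up different scalars under the action, the pullback would be a linear combination with mismatched coefficients rather than a rescaling of $F$, and the clean isomorphism would break; so I would confirm that both $X_0^q X_1 - X_0 X_1^q$ and $Y_0^q Y_1 - Y_0 Y_1^q$ transform with the same (trivial) character of $\SL_2(\FF_q)$. Granting this, one can alternatively bypass the geometry entirely and argue purely algebraically via Lemma \ref{lem:smoothness}: the rank condition in the remark after that lemma is expressed through the $2\times 4$ matrix, and a direct computation shows that applying $(A,B)$ transforms this matrix by invertible $2\times 2$ blocks acting on its columns, preserving the rank-deficiency locus; this gives the same conclusion without invoking the general principle that isomorphisms preserve smoothness.
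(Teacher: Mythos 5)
Your proposal is correct and follows essentially the same route as the paper: both hinge on checking that $X_0^q X_1 - X_0 X_1^q$ and $Y_0^q Y_1 - Y_0 Y_1^q$ are honestly $\SL_2(\FF_q)$-invariant (via $\det A = 1$ and the Frobenius fixing $\FF_q$-coefficients), so that $(A,B)$ is a projective change of coordinates carrying $C_{f,g}$ to $C_{Af,Bg}$, and smoothness is preserved under such an isomorphism. The only caveat is your closing alternative via the $2\times 4$ rank criterion: Lemma \ref{lem:smoothness} characterizes singular points only under the hypothesis that $V(f)$ and $V(g)$ have no $\FF_q$-rational points, which Lemma \ref{lem:SL2action} does not assume, so the geometric coordinate-change argument you give first is the right main line, exactly as in the paper.
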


\begin{proof}
Since the group $(\SL_2 \times \SL_2)(\FF_q)$ acts on $(\PP^1 \times \PP^1)_{\FF_q}$ as a change of projective coordinates, $C_{f,g}$ is smooth if and only if $(A, B)C_{f,g}$ is smooth. Now it is sufficient to check that $(A, B)C_{f,g} = C_{Af, Bg}$. 

The defining equation of $C_{f,g}$ is 
\[
    F = f(Y_0, Y_1)(X_0^q X_1 - X_0 X_1^q) + g(X_0, X_1)(Y_0^q Y_1 - Y_0 Y_1^q) \in \FF_q[X_0, X_1;Y_0, Y_1]_{q+1,q+1}.
\]
The action of $(A, B) \in (\SL_2 \times \SL_2)(\FF_q)$ is given by 
\[
    (A, B)F = Af(Y_0, Y_1)B(X_0^q X_1 - X_0 X_1^q) + Ag(X_0, X_1)B(Y_0^q Y_1 - Y_0 Y_1^q).
\]
But for any
\[
    A = \left[\begin{array}{rr}a&b\\c&d\end{array}\right]\in \SL_2(\FF_q),
\]
\[
\begin{split}
    A (X_0^q X_1 - X_0 X_1^q) &= (aX_0 + bX_1)^q (cX_0 + dX_1) - (aX_0 + bX_1)(cX_0 + dX_1)^q\\
    &= (a^qX_0^q + b^q X_1^q)(cX_0 + dX_1) - (aX_0 + bX_1)(c^qX_0^q + d^qX_1^q)\\
    & = (a X_0^q + b X_1^q)(cX_0 + dX_1) - (aX_0 + bX_1)(c X_0^q + d X_1^q)\\
    &= (ad - bc)(X_0^q X_1 - X_0 X_1^q) = (X_0^q X_1 - X_0 X_1^q).
\end{split}
\]
Similarly, $B(Y_0^q Y_1 - Y_0 Y_1^q) = (Y_0^q Y_1 - Y_0 Y_1^q)$. Thus, 
\[
    (A, B)F = Af(Y_0, Y_1)(X_0^q X_1 - X_0 X_1^q) + Ag(X_0, X_1)(Y_0^q Y_1 - Y_0 Y_1^q)
\]
and $(A, B)C_{f,g} = C_{Af, Bg}$.
\end{proof}

\begin{remark}
We may not use the $(\PGL_2 \times \PGL_2)(\FF_q)$-action for a similar statement. This is because $\PGL_2(\FF_q)$ does not act naturally on $\FF_q[X_0, X_1]_{q+1}$. We also may not use $(\GL_2 \times \GL_2)(\FF_q)$, because $\GL_2(\FF_q)$ acts on $X_0^q X_1 - X_0 X_1^q$ nontrivially.
\end{remark}

Lemma \ref{lem:SL2action} allows us to reduce the computational complexity significantly. Instead of working over all possible pairs of polynomials $f \in \FF_q[Y_0, Y_1]_{q+1}$ and $g \in \FF_q[X_0, X_1]_{q+1}$, we may work with a pair chosen from two polynomials separately chosen from $\SL_2(\FF_q)$-orbits in $\FF_q[X_0, X_1]_{q+1}$. 

\subsection{$q = 3$}\label{ssec:q=3}

Let $\cG_q \subset \FF_q[X_0, X_1]_{q+1}$ be the subset consisting of polynomials without any $\FF_q$-rational points. Then $\cG_q$ admits an induced $\SL_2(\FF_q)$-action. 

For $q = 3$, with computer-aided computation, we obtain the following results. There are seven $\SL_2(\FF_3)$-orbits on $\cG_3$, summarized below. 
\begin{enumerate}
\item $O_{2^2}$ is the orbit of $X_0^4 - X_0^2 X_1^2 + X_1^4$. They are squares of a monic irreducible quadratic polynomial. $|O_{2^2}| = 3$.
\item $-O_{2^2}$ is the orbit of $-X_0^4 + X_0^2 X_1^2 - X_1^4$, the negative of a square of a monic quadratic irreducible polynomial. $|-O_{2^2}| = 3$. 
\item $O_{2+2}$ is the orbit of $X_0^4 + X_1^4$, consisting of quartic polynomials that are products of two distinct irreducible quadratic polynomials. $|O_{2+2}| = 6$.
\item $O_{4a}$ is the orbit of $X_0^4 + X_0 X_1^3 - X_1^4$, which is an irreducible quartic polynomial. $|O_{4a}| = 6$.
\item $O_{4b}$ is the orbit of $X_0^4 - X_0 X_1^3 - X_1^4$, which is also an irreducible polynomial.$|O_{4b}| = 6$.
\item $O_{4c}$ is the orbit of $X_0^4 + X_0^2 X_1^2 - X_1^4$, which is irreducible. $|O_{4c}| = 12$. 
\item $O_{4d}$ is the orbit of $X_0^4 - X_0^2 X_1^2 - X_1^4$, which is also irreducible. $|O_{4d}| = 12$.
\end{enumerate}

For each $g \in \cG_q$, we may consider the set 
\[
    \cF(g) := \{f \in \FF_q[Y_0, Y_1]_{q+1}\;|\; C_{f,g} \mbox{ is a smooth space filling curve}\}. 
\]
Lemma \ref{lem:SL2action} implies that if $g_1$ and $g_2$ are in the same $\SL_2(\FF_q)$-orbit, then $\cF(g_1) = \cF(g_2)$. So for an $\SL_2(\FF_q)$-orbit $O$, the notation $\cF(O)$ is well-defined. Moreover, by Lemma \ref{lem:SL2action} again, each $\cF(O)$ admits an $\SL_2(\FF_q)$-action, hence it is decomposed into $\SL_2(\FF_q)$-orbits. We obtain the following decomposition.
\[
\begin{split}
    \cF(O_{2^2}) = \cF(-O_{2^2}) &= O_{2+2} \sqcup O_{4a} \sqcup O_{4b} \sqcup O_{4c} \sqcup O_{4d},\\
    \cF(O_{2+2}) &= O_{2^2} \sqcup -O_{2^2} \sqcup O_{4a} \sqcup O_{4b} \sqcup O_{4c} \sqcup O_{4d}, \\
    \cF(O_{4a}) &= O_{2^2} \sqcup -O_{2^2} \sqcup \cO_{2+2} \sqcup O_{4a} \sqcup O_{4c} \sqcup O_{4d},\\
    \cF(O_{4b}) &= O_{2^2} \sqcup -O_{2^2} \sqcup O_{2+2} \sqcup O_{4b} \sqcup O_{4c} \sqcup O_{4d}, \\
    \cF(O_{4c}) = \cF(O_{4d}) &= O_{2^2} \sqcup -O_{2^2} \sqcup O_{2+2} \sqcup O_{4a} \sqcup O_{4b}. 
\end{split}
\]

In summary, there are 1,584 smooth space filling curves for $q = 3$, while the set of pairs of polynomials $(f, g)$ without any $\FF_q$-rational points has $48^2 = 2,\!304$ elements. 

\begin{remark}\label{rem:observations}
We may have a few observations. 
\begin{enumerate}
\item For any $g \in \cG_3$, $\cF(g)$ is nonempty. In other words, for any $g$ without any $\FF_3$-rational point, there is $f$ such that $C_{f,g}$ is a smooth space filling curve. 
\item Secondly, the size of the set $\cF(O)$ for each orbit is large -- for each case, only one or two orbits in $\cG_3$ are excluded. 
\item Finally, observe that $O_{4a} \subset \cF(O_{4a})$. This implies that there is a smooth space filling curve of the form $C_{f,f}$, which will be discussed in Section \ref{sec:symmetric} more carefully. 
\end{enumerate}
\end{remark}

\subsection{$q=5$}\label{ssec:q=5}

Using Lemma \ref{lem:SL2action}, we may significantly reduce the size of computation, and can obtain an exhaustive computational result for $q = 5$. First of all,
$\cG_5$, the set of degree 6 homogeneous polynomials without any $\FF_5$-rational points, has $20,\!480$ elements. We can compute its $\SL_2(\FF_5)$-orbit decomposition $\cG_5 = \bigsqcup O_i$. For each orbit $O_i$, we choose a polynomial $f_i \in O_i$, and tested if $C_{f_i, f_j}$ is a smooth space filling curve or not. As a result, we observe that for any orbit $O_i$, $\cF(O_i)$ is nonempty. Furthermore, the total number of smooth space filling curves over $\FF_5$ is $412,\!004,\!800$. Note that the total number of pairs $(f, g)$ without $\FF_5$-rational points is $20,\!480^2 = 419,\!430,\!400$. Therefore, more than $98\%$ of pairs provide smooth space filling curves. You may find a list of pairs $(f,g)$ such that $C_{f,g}$ is singular on \cite{ACM25}. 

\subsection{Statistics in small prime powers}\label{ssec:statistics}

From the numerical data above, we may expect that minimal degree smooth space filling curves are abundant. Using Macaulay2, we ran a numerical experiment. For some small prime powers, we constructed 1,000 random pairs of polynomials $f \in \FF_q[Y_0, Y_1]_{q+1}$ and $g \in \FF_q[X_0, X_1]_{q+1}$ without any $\FF_q$-rational points on $V(f)$ and $V(g)$, and checked if $C_{f,g}$ is smooth. The source code is available on \cite{ACM25}. Table \ref{tbl:smoothpairs} shows that the probability to have a smooth curve converges to 1 very quickly.

\begin{table}[!ht]
\begin{tabular}{|c|r||c|r|} 
 \hline
 $q$  &  Smooth 
 & 
 $q$  &  Smooth 
 \\ [0.5ex] 
 \hline\hline
 3 & 676 & 16 & 998 \\
 \hline
 4 & 935 & 17 & 1,000 \\  
 \hline
 5 & 981 & 19 & 1,000 \\ 
 \hline
 7 & 994 &  23 & 1,000 \\ 
 \hline
 8 & 992 &   25 & 999\\
 \hline
 9 & 995 & 27 & 1,000\\
 \hline
 11 & 995 & 29 & 999 \\ 
 \hline
 13 & 996 & 31 & 1,000 \\
 \hline
\end{tabular}
\medskip
\caption{The number of smooth space filling curves from 1,000 random samples.}
\label{tbl:smoothpairs}
\end{table}


\section{Proof of the main theorem}\label{sec:construction}

In this section, we prove Theorem \ref{thm:mainthm}. Let $f \in \FF_q[Y_0, Y_1]_{q+1}$ be a homogeneous polynomial such that $V(f)$ has no $\FF_q$-rational points. We will construct an explicit $g \in \FF_q[X_0, X_1]_{g+1}$ such that $C_{f,g}$ is smooth. 

\begin{proof}[Proof of Theorem \ref{thm:mainthm}]
We already have the result for $q \le 5$ from Section \ref{sec:smallprime}. We assume $q > 5$. 

Let $g(X_0, X_1) = X_0^{q+1} + \lambda_1 X_0 X_1^q + \lambda_2 X_1^{q+1}$ where $\lambda_1,\lambda_2\in\FF_q^*$ and $\lambda_1^2-4\lambda_2$ is a non-square. This implies $V(g)$ does not have any $\FF_q$-rational points. Indeed, since $g(1, 0) = 1$, $(1:0) \notin V(g)(\FF_q)$. If $(\alpha:1) \in V(g)(\FF_q)$, 
\[
	0 = g(\alpha, 1) = \alpha^{q+1} + \lambda_{1} \alpha + \lambda_{2} = \alpha^{2} + \lambda_{1}\alpha + \lambda_{2}
\]
since $\alpha^{q} = \alpha$ for $\alpha \in \FF_{q}$. Thus, by the quadratic formula, $\lambda_{1}^{2} - 4\lambda_{2}$ is a square in $\FF_{q}$. For any $k \in \FF_q^*$, $V(kg)(\FF_q) = V(g)(\FF_q) = \emptyset$. Therefore, by Lemma \ref{lem:smoothness}, $C_{f, kg}$ is a smooth space filling curve if and only if \eqref{eqn:rankonecondition} has no $\overline{\FF}_{q}$-solution on $\PP^{1} \times \PP^{1}$ for the pair of polynomials $(f, kg)$. Note also that $\lambda_{2} \ne 0$, as otherwise $\lambda_{1}^{2} - 4\lambda_{2}= \lambda_{1}^{2}$ is clearly a square. There are many pairs $(\lambda_1, \lambda_2) \in (\FF_q^*)^2$ such that $\lambda_1^2 - 4\lambda_2$ is not a square. We will choose one of them explicitly so that $C_{f, kg}$ is smooth for some $k \in \FF_q^*$.

Suppose that $(\alpha_{0} : \alpha_{1}) \times (\beta_{0}:\beta_{1}) \in (\PP^{1} \times \PP^{1})(\overline{\FF}_q)$ is a solution of \eqref{eqn:rankonecondition}. Lemma \ref{lem:nonzerocoord} tells us none of the coordinates are zero. The first equality in \eqref{eqn:rankonecondition} is
\[
    \frac{k\lambda_2 \alpha_1^q}{\alpha_0^q} 
    = -\frac{k(\alpha_0^q + \lambda_1\alpha_1^q)}{\alpha_1^q}.
\]
Then we have
\[
    k(\alpha_0^2 + \lambda_1\alpha_0 \alpha_1 + \lambda_2\alpha_1^2)^q = k(\alpha_0^{2q} + \lambda_1\alpha_0^q \alpha_1^q + \lambda_2 \alpha_1^{2q}) = 0.
\]
From $k \ne 0$, setting $\phi := \alpha_1/\alpha_0$, we obtain 
\[
    \lambda_2 \phi^2 + \lambda_1\phi + 1 = 0.
\]
Hence, there are at most two solutions ($\phi \text{ and its Galois conjugate }\phi^q$), and they are in a degree two extension field $\FF_{q^{2}}$ of $\FF_{q}$. Since $\lambda_1^2 - 4\lambda_2$ is not a square by the assumption, we have $\phi \ne \phi^q$, and $\phi, \phi^q \in \FF_{q^2}\setminus \FF_q$.

On the other hand, $(\beta_0: \beta_1)$ is a solution of 
\[
    -\frac{f_{Y_1}}{Y_0^q} = \frac{f_{Y_0}}{Y_1^q},
\]
or equivalently, it satisfies 
\[
    Y_0^q f_{Y_0} + Y_1^q f_{Y_1}=0.
\]
Because it is a homogeneous polynomial of degree $2q$, $V(Y_0^q f_{Y_0} + Y_1^q f_{Y_1})(\overline{\FF}_q)$ has $2q$ points, counted with multiplicities. Set $\gamma_{j} = \beta_{1}/\beta_{0}$ for these $2q$ zeros where $1\le j\le 2q$. Since $(\alpha_0: \alpha_1) \times (\beta_0: \beta_1) \in (\PP^1 \times \PP^1)(\overline{\FF}_q)$ is a solution of \eqref{eqn:rankonecondition}, it must also be a solution of 
\begin{equation}\label{eqn:thirdequation}
\frac{k\lambda_2X_1^q}{X_0^q}=-\frac{f_{Y_1}}{Y_0^q}, 
\end{equation}
hence satisfies 
\[
	k\lambda_{2}\phi^{q} = \frac{k \lambda_{2}\alpha_{1}^{q}}{\alpha_{0}^{q}} = -\frac{f_{Y_{1}}(\beta_{0}, \beta_{1})}{\beta_{0}^{q}} = -f_{Y_{1}}(1, \gamma_{j}).
\]
Note that from our construction, $k, \lambda_2 \in \FF_q$, and $\phi^q \in \FF_{q^2}\setminus \FF_q$, hence $k\lambda_{2}\phi
^q\in \FF_{q^{2}} \setminus \FF_q$. The set  $\FF_{q^{2}}\setminus \FF_{q}$ is divided into $(q^{2}-q)/2$-Galois orbits, for the Galois group $G = G(\FF_{q^{2}}/\FF_{q}) \cong \ZZ/2\ZZ$. (Indeed, it is generated by the Frobenius map $F : a \mapsto a^q$). If $q > 5$, $(q^{2}-q)/2 > 2q$. Therefore, we can always find a Galois orbit $\{\phi_{1}, \phi_{2}\}$ that is disjoint from the set $\{-f_{Y_{1}}(1, \gamma_{j})\} \cap (\FF_{q^2} \setminus \FF_q)$, whose cardinality is at most $2q$. Take the irreducible monic polynomial $X^{2} + aX + b$ of $\phi_{1}, \phi_{2}$. Then its scalar multiple $b^{-1}X^{2} + ab^{-1}X + 1$ has the same zeros $\phi_{1}, \phi_{2}$. Now set $\lambda_{2} := b^{-1}$, $\lambda_{1} := ab^{-1}$, and $k := \lambda_{2}^{-1}$. From the irreducibility of $b^{-1}X^{2} + ab^{-1}X + 1 = \lambda_{2}X^{2} + \lambda_{1}X + 1$, we obtain $\lambda_{1}^{2} - 4\lambda_{2}$ is not a square. And $\{k\lambda_{2}\phi_{i}\} = \{\phi_{i}\}$ and this set is disjoint from $\{-f_{Y_{1}}(1, \gamma_{j})\}$ by the construction. Therefore, \eqref{eqn:thirdequation} does not have any solution, and therefore, $C_{f, kg}$ is smooth. 
\end{proof}

\begin{remark}
The proof does not work over characteristic two fields, because $\lambda_1^2 - 4\lambda_2 = \lambda_1^2$ is always a square. However, as numerical results in Section \ref{sec:smallprime} suggest, we expect that the statement holds for any $q > 2$.
\end{remark}

\section{Symmetric example}\label{sec:symmetric}

Theorem \ref{thm:mainthm} and numerical results in Section \ref{sec:smallprime} suggest that there are plenty of minimal degree smooth space filling curves. In particular, we may construct examples with extra symmetry. In this section, we prove Theorem \ref{thm:symmetric}.

\begin{definition}\label{def:symmetric}
For a pair of polynomials $f(Y_0, Y_1) \in \FF_q[Y_0, Y_1]_{q+1}$ and $g(X_0, X_1) \in \FF_q[X_0, X_1]_{q+1}$, 
a space filling curve $C_{f,g}$ is called \textbf{symmetric} if $g(X_0, X_1) = f(X_0, X_1)$, hence $C_{f,g} = C_{f,f}$.
\end{definition}

\begin{proof}[Proof of Theorem \ref{thm:symmetric}]
Let 
\[
    f(Y_0,Y_1)=Y_0^{q+1}+Y_0 Y_1^q+\lambda Y_1^{q+1}
\]
where $\lambda \in \FF_q\backslash\{-(u^2+u)|u\in \FF_q\}$, and set $g(X_0, X_1) = f(X_0, X_1)$. Since $t : \FF_q \to \FF_q$, $t(x) = -(x^2+x)$ is a degree two map ramified at a point, its image has $\frac{q-1}{2} + 1 = \frac{q+1}{2}$ points. Thus, there are $\frac{q-1}{2}$ choices of $\lambda$. 

We claim that $V(f)$ does not have any $\FF_q$-rational points. Note that $f(1,0)= 1 \neq0$. For the other $\FF_q$-points, we may set its coordinate to $(y:1)$. Then 
\[
    f(y,1)=y^{q+1}+y+\lambda =y^2+y+\lambda.
\]
Hence, $(y:1) \in V(f)$ only if $\lambda=-(y^2+y)$. From our choice of $\lambda$, this is not the case. 

We may use Lemma \ref{lem:smoothness} to show that $C_{f,f}$ is smooth. To do so, we need to show that 
\begin{equation}\label{eqn:symmetricsmooth}
    \frac{\lambda X_1^q}{X_0^q}=\frac{-X_0^q-X_1^q}{X_1^q}=\frac{-\lambda Y_1^q}{Y_0^q}=\frac{Y_0^q+Y_1^q}{Y_1^q}
\end{equation}
has no solutions in $(\PP^1\times\PP^1)(\overline{\FF}_q)$. 

Now, suppose that $P=(\alpha_0:\alpha_1)\times(\beta_0:\beta_1)\in(\PP^1\times\PP^1)(\overline{\FF}_q)$ satisfies \eqref{eqn:symmetricsmooth}. By Lemma \ref{lem:nonzerocoord}, we may assume that $\alpha_0, \alpha_1, \beta_0, \beta_1$ are all nonzero. Let $x := \alpha_1/\alpha_0$ and $y := \beta_1/\beta_0$. Then \eqref{eqn:symmetricsmooth} becomes 
\[
    \lambda x^q=-\frac{1}{x^q}-1=-\lambda y^q=\frac{1}{y^q}+1.
\]
Hence $x^q=-y^q$, which implies that 
\[
    \frac{1}{y^q}-1=\frac{1}{y^q}+1,
\]
a contradiction over an odd characteristic field. Thus, no point in $(\PP^1\times\PP^1)(\overline{\FF}_q)$ satisfies \eqref{eqn:symmetricsmooth}, hence $C_{f,f}$ is smooth.
\end{proof}

\begin{remark}\label{rmk:moresymmetricexamples}
A similar computation shows that there are many examples of symmetric smooth space filling curves. For instance, the following polynomials also provide examples when $\lambda \in \FF_q \setminus \{-(u^2+u)\;|\; u \in \FF_q\}$. 
\[
    Y_0^{q+1} - Y_0 Y_1^q + \lambda Y_1^{q+1}, \quad 
    Y_0^{q+1} + Y_0^q Y_1 + \lambda Y_1^{q+1}, \quad
    Y_0^{q+1} - Y_0^q Y_1 + \lambda Y_1^{q+1}.
\]
\end{remark}

\begin{remark}\label{rmk:char2symmetric}
From the numerical search in Section \ref{sec:smallprime}, we found that there is no symmetric smooth space filling curve for $q = 4$.
\end{remark}

\bibliographystyle{alpha}

\end{document}